\newfont{\footsc}{cmcsc10 at 8truept}
\newfont{\footbf}{cmbx10 at 8truept}
\newfont{\footrm}{cmr10 at 10truept}
\newtheorem{theorem}{\bf Theorem}
\newtheorem{proposition}{\bf Proposition}
\newtheorem{lemma}{\bf Lemma}
\newtheorem{corollary}{\bf Corollary}
\begin{document}
\title{On Normal Variance-Mean Mixtures}

\author{Yaming Yu\\
\small Department of Statistics\\[-0.8ex]
\small University of California\\[-0.8ex] 
\small Irvine, CA 92697, USA\\[-0.8ex]
\small \texttt{yamingy@uci.edu}}

\date{}
\maketitle

\begin{abstract}
Normal variance-mean mixtures encompass a large family of useful distributions such as the generalized 
hyperbolic distribution, which itself includes the Student $t$, Laplace, hyperbolic, normal inverse Gaussian, and
variance gamma distributions as special cases.  We study shape properties of normal variance-mean mixtures, in both
the univariate and multivariate cases, and determine conditions for unimodality and log-concavity of the density 
functions.  This leads to a short proof of the unimodality of all generalized hyperbolic densities.  We also interpret 
such results in practical terms and discuss discrete analogues. 

{\bf Keywords:} convex contours; distribution theory; generalized inverse Gaussian distribution; log-convexity; modified Bessel function; normal mixture; Poisson mixture; total positivity. 

{\bf MSC2010 Classifications:} 60E05; 62E15. 
\end{abstract}

\section{Introduction}
A univariate normal variance-mean mixture (Barndorff-Nielsen, Kent and S{\o}rensen 1982) is the distribution of 
\begin{equation}
\label{mix}
Y=\mu +\beta X + \sigma \sqrt{X}Z
\end{equation}
where $X$ and $Z$ are independent scalar random variables, $Z\sim {\rm N}(0, 1)$, $X$ has a density (the mixing density) supported on $(0,\infty)$, and $-\infty<\mu, \beta<\infty,\ \sigma>0$ are constants.  Equivalently, a normal variance-mean mixture is the distribution of a Brownian motion with drift stopped at an independent random time.  Normal variance-mean mixtures encompass a large family of distributions commonly used in many applied fields.  A prominent example is the generalized hyperbolic (GH) distribution (Barndorff-Nielsen, 1977).  This distribution has an unnormalized density of the form ($y\in \mathbf{R}$) 
\begin{equation}
\label{dens}
{\rm gh}(y; \mu, \lambda, \alpha, \beta, \delta)\propto  e^{\beta(y-\mu)} \left(\delta^2 + (y-\mu)^2\right)^{(\lambda-1/2)/2}
K_{\lambda-1/2} \left(\alpha \sqrt{\delta^2+(y-\mu)^2}\right),
\end{equation}
where $K_\nu(z)$ denotes the modified Bessel function of the second kind (Abramowitz and Stegun 1972).  The allowable parameter values are $\mu\in \mathbf{R}$ and 
\begin{align*}
\delta\geq 0,&\quad |\beta|<\alpha,\quad {\rm if}\ \lambda >0;\\
\delta> 0,&\quad |\beta|<\alpha,\quad {\rm if}\ \lambda =0;\\
\delta> 0,&\quad |\beta|\leq \alpha,\quad {\rm if}\ \lambda <0.
\end{align*}
The density (\ref{dens}) arises as the density of $Y$ in (\ref{mix}) if we let $\sigma=1$ and let the density of $X$ be the generalized inverse Gaussian (GIG; J{\o}rgensen 1982) 
\begin{equation}
\label{gig}
{\rm gig}(x; \lambda, \chi, \psi)= \frac{(\psi/\chi)^{\lambda/2}}{2K_\lambda(\sqrt{\psi\chi})} x^{\lambda-1} \exp\left[-\frac{1}{2}(\chi x^{-1} +\psi x)\right],\quad x>0. 
\end{equation}
Parameters other than the common $\lambda$ are related by $\chi=\delta^2$ and $\psi=\alpha^2-\beta^2$. 

The density (\ref{dens}) includes the Student $t$, Laplace, hyperbolic, normal inverse Gaussian, and variance gamma densities as special cases.  These are important in modeling financial data; see, e.g., Eberlein and Keller (1995), Barndorff-Nielsen (1997), Eberlein (2001), and Chen, H\"{a}rdle and Jeong (2004).  The appearance of modified Bessel functions, however, makes them nontrivial for both theoretical analysis and numerical computation.  We refer to Protassov (2004) for computational strategies in parameter estimation. 

This paper is motivated by a basic property of the GH density, namely its unimodality.  A function $f(x)$ on $\mathbf{R}$ is unimodal, if there exists $x_* \in \mathbf{R}$ such that $f(x)$ increases on $(-\infty, x_*]$ and decreases on $[x_*,\infty)$.  Besides being inherently interesting, unimodality results are also useful.  Chebyshev's bounds for tail probabilities can be sharpened considerably if the distribution is known to be unimodal (Sellke and Sellke 1997).  Shape properties such as unimodality are also important in random variate generation (Devroye 1986). 

All GH densities are unimodal.  Although this can be verified analytically in some cases, it is certainly not obvious from the formula (\ref{dens}).  The only general proof of unimodality we know is based on two deep results: (i) GH distributions are self-decomposable (Halgreen 1979; Shanbhag and Sreehari 1979; Sato 2001) and (ii) self-decomposable distributions are unimodal (Yamazato 1978; Steutel and van Harn 2003).  Conceivably there may exist a direct proof based on modified Bessel functions inequalities (see Yu 2011 for a similar situation).  In this paper, however, we take another approach and obtain a unimodality condition for a general normal variance-mean mixture.  The GH case follows as a corollary. 

Our main result (Theorem~\ref{thm1}) says that unimodality is preserved by forming univariate normal variance-mean mixtures.  The same holds for log-concavity.  A nonnegative function $f(x),\ x\in\mathbf{R},$ is log-concave if 
$$f(\theta x+(1-\theta)y)\geq \left[f(x)\right]^\theta\left[f(y)\right]^{1-\theta}\quad {\rm for\ all}\quad x,y\in \mathbf{R},\ \theta\in (0, 1).$$
Log-concavity implies unimodality and is often referred to as ``strong unimodality'' (Dharmadhikari and Joag-dev, 1988).  We also consider log-convexity.  A positive function $f$ on an interval $I$ is log-convex if $\log f$ is convex on $I$. 

\begin{theorem}
\label{thm1}
Suppose $X$ and $Y$ are related by (\ref{mix}) with densities $g$ and $f$ respectively. 
\begin{itemize}
\item[i]
If $g$ is unimodal, then so is $f$. 
\item[ii]
If $g$ decreases on $(0,\infty)$, or $\beta=0$, then the only mode of $f$ is at $\mu$. 
\item[iii]
If $g$ is log-concave, then so is $f$. 
\item[iv]
If $g$ is log-convex on $(0,\infty)$, then $f$ is log-convex on each of $(-\infty, \mu)$ and $(\mu, \infty)$. 
\end{itemize}
\end{theorem}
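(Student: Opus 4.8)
\emph{Proof proposal.} The plan is to deduce all four parts from two facts about the Gaussian kernel $p(y,x)=(2\pi\sigma^2x)^{-1/2}\exp\{-(y-\mu-\beta x)^2/(2\sigma^2x)\}$, for which $f(y)=\int_0^\infty p(y,x)g(x)\,dx$. After translating to $\mu=0$ and, if necessary, reflecting $(Y,\beta)\mapsto(-Y,-\beta)$ so that $\beta\ge0$, I would first record: (a) since $p(y,x)=e^{\beta y/\sigma^2}(2\pi\sigma^2x)^{-1/2}e^{-\beta^2x/(2\sigma^2)}\cdot e^{-y^2/(2\sigma^2x)}$ and $\partial_y\partial_x\bigl(-y^2/(2\sigma^2x)\bigr)=y/(\sigma^2x^2)>0$ for $y>0$, the kernel $p$ is strictly totally positive of every order in $(y,x)$ on $(0,\infty)\times(0,\infty)$ (and, after the reflection, on $(-\infty,0)\times(0,\infty)$); and (b) by the elementary integral $\int_0^\infty x^{-1/2}e^{-a/x-bx}\,dx=\sqrt{\pi/b}\,e^{-2\sqrt{ab}}$, the $\mathrm{N}(mx,\sigma^2x)$ kernel integrates over $x$ to the constant $1/m$ for every $m>0$ and every $y>0$ — on its ``drift side'' it sends the constant $1$ to a constant. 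The case $\beta=0$ I would dispose of directly: $f$ is then an even scale mixture of centred normals and $f'(y)=-(y/\sigma^2)\int_0^\infty x^{-1}p(y,x)g(x)\,dx$ has the sign of $-y$, so $f$ is unimodal with its only mode at $\mu$, covering (i) and the $\beta=0$ case of (ii).

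For (i) with $\beta>0$: on $(-\infty,0)$ we have $f'(y)=\sigma^{-2}\int_0^\infty(\beta-y/x)p(y,x)g(x)\,dx>0$ since $\beta-y/x>0$ there, so $f$ strictly increases on $(-\infty,\mu)$. On $(0,\infty)$ I would invoke the variation-diminishing property: for each $c\ge0$ the function $g-c$ has at most two sign changes, in the order $-,+,-$, because $g$ is unimodal, so by (a) and (b) the function $f(\cdot)-c/\beta=\int_0^\infty p(y,x)(g(x)-c)\,dx$ has at most two sign changes on $(0,\infty)$ in the same order; thus every upper level set of $f$ on $(0,\infty)$ is an interval, and gluing with the increasing piece yields unimodality of $f$ on $\mathbf R$.

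For (iii)--(iv) the device is an exponential tilt. Completing the square gives $e^{-by}p(y,x)=e^{\lambda_bx}p_{m_b}(y,x)$, where $m_b=\beta-\sigma^2b$, $\lambda_b=\tfrac12\sigma^2b^2-\beta b$, and $p_{m_b}$ is the $\mathrm{N}(m_bx,\sigma^2x)$ kernel; hence $e^{-by}f(y)=\int_0^\infty p_{m_b}(y,x)\,e^{\lambda_bx}g(x)\,dx$ (the integral converges for $y\ne0$ because $\lambda_b-m_b^2/(2\sigma^2)=-\beta^2/(2\sigma^2)\le0$, regardless of whether $e^{\lambda_bx}g(x)$ is integrable). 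Since $f(y)=e^{\beta y/\sigma^2}h(y)$ with $h$ even and strictly decreasing on $(0,\infty)$ (read off from the factorisation in (a)), every secant slope of $\log f$ on $(0,\infty)$ is $<\beta/\sigma^2$; so in checking log-concavity/convexity on $(0,\infty)$ via the characterisation ``$\log\phi$ is concave (resp.\ convex) on an interval iff every exponential tilt $e^{-by}\phi$ is quasi-concave (resp.\ quasi-convex) there,'' only tilts with $m_b>0$ arise, and for those (b) applies. If $g$ is log-concave then $e^{\lambda_bx}g(x)$ is log-concave, hence unimodal, and the argument of (i) shows $e^{-by}f$ is quasi-concave on $(0,\infty)$ for all relevant $b$; thus $\log f$ is concave on $(0,\infty)$, and by evenness of $h$ (with its monotonicity) concave on all of $\mathbf R$. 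If $g$ is log-convex then $e^{\lambda_bx}g(x)$ is quasi-convex, so $e^{\lambda_bx}g(x)-c$ has order $+,-,+$; variation diminishing makes $e^{-by}f(\cdot)-c/m_b$ of order $+,-,+$ on $(0,\infty)$, i.e.\ $e^{-by}f$ is quasi-convex there, giving $\log f$ convex on $(0,\infty)$; reflecting $y\mapsto-y,\ \beta\mapsto-\beta$ handles $(-\infty,0)$.

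For (ii) with $g$ decreasing (so $\beta>0$ after reflection), I would write $g(x)=\nu((x,\infty))$ with $\nu=-dg\ge0$, so $f(y)=\int_{(0,\infty)}P(y,t)\,d\nu(t)$ with $P(y,t)=\int_0^tp(y,x)\,dx$. For $y<0$, $\partial_yP(y,t)=\sigma^{-2}\int_0^t(\beta-y/x)p(y,x)\,dx>0$; for $y>0$, by (b), $\int_0^t(\beta-y/x)p(y,x)\,dx=-\int_t^\infty(\beta-y/x)p(y,x)\,dx<0$ (either $t\ge y/\beta$, so the right-hand integrand is positive, or $t<y/\beta$, so the original integrand is negative on $(0,t)$). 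So each $P(\cdot,t)$, hence $f$, strictly increases on $(-\infty,\mu)$ and strictly decreases on $(\mu,\infty)$, and the only mode is at $\mu$. The part I expect to be the real work is the bookkeeping at the single point $\mu$, where $f$ need be neither finite nor differentiable, so that the half-line statements glue correctly, together with verifying in (iii)--(iv) that only tilts with $m_b>0$ occur; the total-positivity input is classical and (b) is a one-line computation.
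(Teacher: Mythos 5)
Your proposal is correct and takes essentially the same route as the paper: the same Gaussian kernel treated as a strictly totally positive kernel, the same normalization identity $\int_0^\infty k(x,y)\,dx=1/\beta$ for $y>0$, the variation-diminishing property applied to level crossings for unimodality and to exponential crossings (your tilt $e^{-by}f(y)=\int_0^\infty p_{m_b}(y,x)e^{\lambda_b x}g(x)\,dx$ is exactly the paper's identity with $\gamma=m_b$) for log-concavity and log-convexity, with the left half-line controlled by the sign of $f'(y)$ there. Your only departures are minor bookkeeping choices — restricting to tilts with $m_b>0$ via the secant-slope bound and gluing across $\mu$ by evenness of $h$, where the paper instead uses the derivative bound $l'(y)>\gamma$ for $y<0$ and treats $\gamma>0$ and $\gamma<0$ separately, and your mixture-of-uniforms computation for part (ii) replaces one more application of variation diminishing — and the ``bookkeeping at $\mu$'' you flag is treated equally lightly in the paper.
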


Theorem~\ref{thm1} has clear practical implications.  According to part (i), one must incorporate a multimodal mixing distribution in order for a normal variance-mean mixture model to capture multimodality in the data.  This contrasts with other shape properties; skewness, for example, depends on the parameter $\beta$ and is not entirely inherited from the mixing distribution.  On the other hand, log-concave densities have no heavier than exponential tails.  Part (iii) therefore indicates that normal variance-mean mixtures inherit their heavy tails from the mixing distributions.  That is, one needs to choose a heavy-tailed mixing distribution in (\ref{mix}) in order to model heavy tails in the data. 

In the case of the GH distribution, Theorem~\ref{thm1} specializes to Corollary~\ref{coro1}.  We obtain a short proof of unimodality and a simple criterion for log-concavity for GH densities. 

\begin{corollary}
\label{coro1}
Let $f(y)$ denote the GH density given by (\ref{dens}).  
\begin{itemize}
\item[i]
The density $f(y)$ is unimodal.
\item[ii]
The mode of $f(y)$ is at $\mu$ iff either $\beta=0$ or $\delta=0,\, 0<\lambda\leq 1$. 
\item[iii]
The density $f(y)$ is log-concave iff $\lambda\geq 1$. 
\item[iv]
The density $f(y)$ is log-convex on each of $(-\infty, \mu)$ and $(\mu, \infty)$ iff $\delta=0$ and $0<\lambda\leq 1$. 
\end{itemize}
\end{corollary}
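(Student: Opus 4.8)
The plan is to recognize the GH density (\ref{dens}) as the normal variance-mean mixture (\ref{mix}) with $\sigma=1$ and mixing density the GIG density (\ref{gig}), taking $\chi=\delta^2$ and $\psi=\alpha^2-\beta^2$, and then to invoke Theorem~\ref{thm1}; note that the GH parameter constraints force $\chi>0$ when $\lambda\le 0$ and $\psi>0$ when $\lambda\ge 0$, so (\ref{gig}) is always a genuine density. The input this requires is the shape analysis of the GIG density itself. Writing $\ell(x)=\log {\rm gig}(x;\lambda,\chi,\psi)={\rm const}+(\lambda-1)\log x-\tfrac12(\chi x^{-1}+\psi x)$, one has $x^2\ell'(x)=-\tfrac12\psi x^2+(\lambda-1)x+\tfrac12\chi$ and $\ell''(x)=(1-\lambda)x^{-2}-\chi x^{-3}$. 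Inspecting the quadratic $x^2\ell'(x)$ (whose value at $0^+$ is $\tfrac12\chi\ge 0$) shows the GIG density is always unimodal and decreases on $(0,\infty)$ exactly when $\chi=0$ and $\lambda\le 1$; inspecting the sign of $\ell''$ shows the GIG density is log-concave on $(0,\infty)$ iff $\lambda\ge 1$ and log-convex on $(0,\infty)$ iff $\chi=0$ and $\lambda\le 1$. Since $\chi=0$ (that is, $\delta=0$) forces $\lambda>0$, the last two conditions both read $\delta=0,\ 0<\lambda\le 1$. Theorem~\ref{thm1}(i)--(iv) then yields part (i), the ``if'' direction of the mode statement in (ii), the ``if'' direction ($\lambda\ge 1$) of (iii), and the ``if'' direction ($\delta=0,\ 0<\lambda\le 1$) of (iv), respectively.

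What remains is to show these conditions are necessary. For the converse of (iii), and for one outstanding case of (iv), I would examine the right tail of (\ref{dens}). Putting $s=y-\mu$ and $\nu=\lambda-\tfrac12$, and substituting the classical expansion $\log K_\nu(z)=-z-\tfrac12\log z+\tfrac12\log(\pi/2)+O(z^{-1})$ (valid as $z\to\infty$ and differentiable termwise) into $\log f$, a short computation gives
\[
(\log f)''(y)=\frac{1-\lambda}{(y-\mu)^{2}}+O\!\left((y-\mu)^{-3}\right)\qquad\text{as }y\to+\infty .
\]
Hence for large $y$ the sign of $(\log f)''$ is that of $1-\lambda$. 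If $\lambda<1$ then $\log f$ is convex near $+\infty$, so $f$ is not log-concave, proving the converse of (iii). If $\lambda>1$ then $\log f$ is concave near $+\infty$, so $f$ is not log-convex on $(\mu,\infty)$; since $\delta=0$ forces $\lambda>0$, this disposes of the case $\delta=0,\ \lambda>1$ of (iv) not handled by the local analysis below.

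For the converses of (ii), and of (iv) when $\delta>0$, I would analyze $f$ near $y=\mu$. When $\delta>0$ the function $h(u):=(\delta^2+u)^{\nu/2}K_\nu(\alpha\sqrt{\delta^2+u})$ is real-analytic near $u=0$ with $h(0)>0$, and $f$ equals a positive constant times $e^{\beta(y-\mu)}h\bigl((y-\mu)^2\bigr)$; hence $f'(\mu)={\rm const}\cdot\beta\, h(0)\ne 0$ whenever $\beta\ne 0$, so the mode of $f$ is not at $\mu$. Moreover, from the identity $\frac{d}{dt}[t^\nu K_\nu(t)]=-t^\nu K_{\nu-1}(t)<0$ one gets $h'(0)<0$, so $(\log f)''(y)\to 2h'(0)/h(0)<0$ as $y\to\mu^+$ and $f$ fails to be log-convex just to the right of $\mu$. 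Finally, when $\delta=0$ and $\lambda>1$ (so $\nu>\tfrac12$), the expansion of $|s|^\nu K_\nu(\alpha|s|)$ about $s=0$ has the form $c_0+O(|s|^{p})$ with $c_0>0$ and $p=\min(2\nu,2)>1$ (a logarithmic factor appearing when $\nu\in\mathbf{Z}$, harmless since then $p\ge 2$), so this factor is differentiable at $\mu$ with derivative $0$; hence $f'(\mu)={\rm const}\cdot\beta c_0\alpha^{-\nu}\ne 0$ when $\beta\ne 0$, completing the converse of (ii).

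The main obstacle, I expect, is keeping the converse arguments honest around the non-smooth point $y=\mu$: when $\delta=0$ the GH density has a cusp at $\mu$ whose exponent depends on $\lambda$, and one must verify that this cusp (or the behaviour of $K_\nu$ near $0$) dominates the smooth skewing factor $e^{\beta(y-\mu)}$; the tail computation, though routine, likewise relies on differentiating the asymptotic expansion of $\log K_\nu$. Everything else is a direct consequence of Theorem~\ref{thm1} together with the elementary GIG shape facts above.
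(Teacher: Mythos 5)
Your proposal is correct, and its skeleton is the same as the paper's: identify (\ref{dens}) as the mixture (\ref{mix}) with GIG mixing density, verify the elementary GIG shape facts (always unimodal; decreasing iff $\chi=0,\ 0<\lambda\le 1$; log-concave iff $\lambda\ge 1$; log-convex iff $\chi=0,\ 0<\lambda\le 1$), and feed them into Theorem~\ref{thm1} for all the ``if'' directions, plus a first-derivative computation at $\mu$ (i.e.\ $f'(\mu)\propto\beta$, via $\frac{d}{dt}[t^\nu K_\nu(t)]=-t^\nu K_{\nu-1}(t)$ and the small-argument behaviour of $K_\nu$) for the converse of (ii). Where you genuinely diverge is in the necessity arguments for (iii) and especially (iv). For (iii) the paper simply cites the known tail behaviour $|y|^{\lambda-1}e^{(\beta\mp\alpha)y}$ (Barndorff-Nielsen and Bl{\ae}sild 1981), whereas you rederive it by differentiating the asymptotic expansion of $\log K_\nu$ to get $(\log f)''\sim(1-\lambda)(y-\mu)^{-2}$; this is self-contained but obliges you to justify termwise differentiation (which is fine here, since derivatives of $K_\nu$ reduce via (\ref{bess2}) to Bessel functions with the same expansions). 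For (iv) the paper argues more softly: log-convexity on both half-lines plus $f\to 0$ forces monotonicity on each side, part (ii) then forces $\beta=0$ or $\delta=0,\ 0<\lambda\le 1$, and the residual cases are killed by $f'(0)=0$ combined with log-convexity--- only first-derivative information is needed. You instead compute curvature directly: $(\log f)''(y)\to 2h'(0)/h(0)<0$ as $y\to\mu^+$ when $\delta>0$, and $(\log f)''<0$ in the tail when $\delta=0,\ \lambda>1$. Both routes are valid; yours is more computational but avoids chaining (iv) through (ii) and through the global monotonicity argument, while the paper's is shorter and dodges any second-derivative asymptotics.
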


Theorem~\ref{thm1} is related to the following result for normal mean mixtures (Dharmadhikari and Joag-dev, 1988; Bertin and Theodorescu, 1995).  Suppose $Y|X \sim {\rm N}(X, \sigma^2)$ and $X$ has density $g$.  Then, denoting the marginal density of $Y$ by $f$, we have (i) if $g$ is unimodal then so is $f$; (ii) if $g$ is log-concave then so is $f$.  While these two hold by basic convolution properties of log-concave functions, the proof of Theorem~\ref{thm1} is more complicated.  A key tool is the theory of total positivity (Karlin 1968); see Section~3 for details. 

We also consider the multivariate case.  A $p$-variate normal variance-mean mixture is the distribution of 
\begin{equation} 
\label{mmix}
Y=\mu + \beta X + \sqrt{X} AZ
\end{equation}
where $\mu, \beta$ are $p\times 1$ vectors, $A$ is a full-rank $p\times p$ matrix, $X$ is a scalar random variable supported on $(0, \infty)$,
and $Z$ is a $p\times 1$ standard normal vector independent of $X$.  Owing to properties of the multivariate normal,
such mixtures are closed under marginalization and linear transformations.  They have the following feature regardless of the shape of the mixing density  ($\|\cdot\|$ denotes the Euclidean norm). 
\begin{proposition}
\label{spindle}
Suppose $\beta\neq 0$ in (\ref{mmix}).  Then for each $t\in \mathbf{R}$, the density of 
$Y$, denoted by $f(y)$, has ellipsoidal contours on the hyperplane 
$$\mathcal{H}_t\equiv \mu + \beta t + \left\{z\in\mathbf{R}^p:\ z^\top (AA^\top)^{-1}\beta=0 \right\}.$$ 
Specifically, $f(y)$ depends only on $\|A^{-1}(y-\mu-\beta t)\|$ for $y\in \mathcal{H}_t$. 
\end{proposition}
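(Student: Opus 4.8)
The plan is to write the density of $Y$ explicitly by conditioning on $X$, and then to show that once we restrict to $\mathcal{H}_t$ the integrand depends on $y$ only through the stated norm. Conditionally on $X=x$, representation (\ref{mmix}) gives $Y\mid X=x \sim {\rm N}(\mu+\beta x,\, x\,AA^\top)$, so writing $\Sigma=AA^\top$ (positive definite, since $A$ has full rank, whence $\det\Sigma=(\det A)^2>0$) the marginal density is
$$f(y)=\int_0^\infty \frac{1}{(2\pi x)^{p/2}\,|\det A|}\exp\!\left(-\frac{1}{2x}\,Q(y,x)\right) g(x)\,dx,\qquad Q(y,x)=(y-\mu-\beta x)^\top\Sigma^{-1}(y-\mu-\beta x),$$
where $g$ is the mixing density. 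Everything will hinge on the quadratic form $Q(y,x)$.

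The second step is the key observation, which is purely algebraic (and geometrically just Pythagoras in the $\Sigma^{-1}$-inner product). Fix $t$ and take $y\in\mathcal{H}_t$, i.e. $y=\mu+\beta t+z$ with $z^\top\Sigma^{-1}\beta=0$. Then $y-\mu-\beta x=z+\beta(t-x)$, and expanding,
$$Q(y,x)=z^\top\Sigma^{-1}z+2(t-x)\,z^\top\Sigma^{-1}\beta+(x-t)^2\,\beta^\top\Sigma^{-1}\beta=z^\top\Sigma^{-1}z+(x-t)^2\,\beta^\top\Sigma^{-1}\beta,$$
the cross term vanishing by the defining condition of $\mathcal{H}_t$. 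Since $z^\top\Sigma^{-1}z=(A^{-1}z)^\top(A^{-1}z)=\|A^{-1}(y-\mu-\beta t)\|^2$, the first summand is exactly the squared norm in the statement, while the second summand does not involve $y$ at all. (Geometrically: $\mathcal{H}_t$ meets the line $\{\mu+\beta s:s\in\mathbf{R}\}$ orthogonally, in the $\Sigma^{-1}$ metric, at $\mu+\beta t$, so the squared Mahalanobis distance from $y\in\mathcal{H}_t$ to the conditional mean $\mu+\beta x$ splits into a $y$-part and an $x$-part.)

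The final step is to substitute back: for $y\in\mathcal{H}_t$, writing $r=\|A^{-1}(y-\mu-\beta t)\|$,
$$f(y)=\int_0^\infty \frac{1}{(2\pi x)^{p/2}\,|\det A|}\exp\!\left(-\frac{r^2}{2x}-\frac{(x-t)^2\,\beta^\top\Sigma^{-1}\beta}{2x}\right) g(x)\,dx,$$
so the integrand, and hence $f(y)$, depends on $y$ only through $r$ — which is precisely the claim that $f$ has ellipsoidal contours $\{y\in\mathcal{H}_t:\|A^{-1}(y-\mu-\beta t)\|=\mathrm{const}\}$ on $\mathcal{H}_t$. I do not expect a genuine obstacle here; the only points needing care are that $AA^\top$ is invertible (so the conditional normal is nondegenerate and $\mathcal{H}_t$ is well defined), the cancellation of the $z$–$\beta$ cross term, and noting that the hypothesis $\beta\neq0$ is what makes $\mathcal{H}_t$ an honest hyperplane rather than all of $\mathbf{R}^p$.
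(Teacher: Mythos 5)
Your proof is correct and is essentially the paper's argument: both rest on the same Pythagorean decomposition of the Gaussian exponent, $Q(y,x)=\|A^{-1}(y-\mu-\beta t)\|^2+(x-t)^2\,\beta^\top(AA^\top)^{-1}\beta$, for $y\in\mathcal{H}_t$. The only cosmetic difference is that you carry out the computation directly in the $\Sigma^{-1}=(AA^\top)^{-1}$ inner product, whereas the paper first standardizes to $\mu=0$, $A=I$ and recovers the ellipsoidal contours by a linear transformation.
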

For example, if $A$ is the identity matrix, then $f(y)$ is spherically symmetric on any hyperplane orthogonal to $\beta$ (one may say $f(y)$ has ``spindle-like'' contours, $\beta$ being the direction of the center). 

Theorem~\ref{thm2} extends Theorem~\ref{thm1} to the multivariate case.  The notion of 
log-concavity is trivially extended from the univariate case.  Unimodality is more delicate, however, 
and there exist several alternative definitions (Dharmadhikari and Joag-dev 1988).  Theorem~\ref{thm2} uses a rather weak form of unimodality. 

\begin{theorem}
\label{thm2}
Suppose $X$ and $Y$ are related by (\ref{mmix}) with densities $g(x)$ and $f(y)$ on $x\in (0,\infty)$ and $y\in \mathbf{R}^p$ respectively.  Let $g^*(x)=x^{-(p-1)/2} g(x)$.  
\begin{itemize}
\item[i]
If $g^*(x)$ is unimodal, then $f(y)$ has only one local maximum, which lies on the line $y=\mu+\beta t,\ t\in\mathbf{R}$.
\item[ii]
If $g^*(x)$ decreases on $(0,\infty)$, or $\beta=0$, then the only local maximum of $f(y)$ is at $y=\mu$. 
\item[iii]
If $g^*(x)$ is log-concave, then so is $f(y)$. 
\item[iv]
If $g^*(x)$ is log-convex on $(0,\infty)$, then $f(\mu + bt)$ is log-convex on $t\in (0, \infty)$ for all nonzero $b\in \mathbf{R}^p$. 
\end{itemize}
\end{theorem}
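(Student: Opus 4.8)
The plan is to deduce Theorem~\ref{thm2} from the univariate Theorem~\ref{thm1}. Since an invertible affine change of coordinates preserves log-concavity, preserves log-convexity along a line, and preserves the property of having a single local maximum confined to a prescribed line---and since it does not touch the mixing variable $X$, hence leaves $g$ and $g^*$ unchanged---we may apply $y\mapsto A^{-1}(y-\mu)$ and assume from now on that $A=I$ and $\mu=0$, writing $\beta$ for the transformed drift. Expanding $\|y-\beta x\|^2=\|y\|^2-2x\beta^\top y+x^2\|\beta\|^2$ inside $f(y)=\int_0^\infty g(x)\,(2\pi x)^{-p/2}\exp(-\|y-\beta x\|^2/(2x))\,dx$ and factoring out $e^{\beta^\top y}$ gives the basic identity
\[
f(y)=e^{\beta^\top y}\,h(\|y\|^2),\qquad
h(\rho)=\int_0^\infty g(x)\,(2\pi x)^{-p/2}\exp\!\left(-\frac{\rho}{2x}-\frac{\|\beta\|^2 x}{2}\right)dx ,
\]
from which Proposition~\ref{spindle} is immediate. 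Pulling $(2\pi)^{-(p-1)/2}$ out of $h$ and folding $x^{-(p-1)/2}$ into $g$ shows that, for $\rho=s^2$ with $s\ge 0$,
\[
h(s^2)=(2\pi)^{-(p-1)/2}\,e^{-\|\beta\|\,s}\,\tilde f(s),
\]
where $\tilde f(s)=\int_0^\infty g^*(x)\,(2\pi x)^{-1/2}\exp(-(s-\|\beta\|x)^2/(2x))\,dx$ is, up to a normalizing constant, the density of the \emph{univariate} normal variance-mean mixture with location $0$, scale $1$, drift $\|\beta\|$ and mixing density proportional to $g^*(x)=x^{-(p-1)/2}g(x)$; equivalently, the restriction of $f$ to the axis $\{t\beta/\|\beta\|:t\in\mathbf{R}\}$ is proportional to $\tilde f$. (If $g^*$ is not integrable near $0$ one invokes that the total-positivity proof of Theorem~\ref{thm1} in Section~3 never uses $\int g^*=1$, or truncates $g$ away from the origin and lets the truncation level go to $0$, using that locally uniform limits preserve unimodality, log-concavity and log-convexity.)

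For parts (i) and (ii), decompose $y=u\hat\beta+v$ with $\hat\beta=\beta/\|\beta\|$ and $v\perp\beta$; then
\[
f(y)=\int_0^\infty g(x)\,(2\pi x)^{-p/2}\exp\!\left(-\frac{(u-\|\beta\|x)^2}{2x}-\frac{\|v\|^2}{2x}\right)dx
\]
is strictly decreasing in $\|v\|$, because $e^{-\|v\|^2/(2x)}$ is. Hence every local maximum of $f$ has $v=0$, i.e.\ lies on the axis, where $f$ is proportional to $\tilde f$; applying Theorem~\ref{thm1}(i) and (ii) to $\tilde f$ (with $g^*$ playing the role of the mixing density) yields the two statements. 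When $\beta=0$ the reduction already gives $f(y)=h(\|y\|^2)$ with $h$ strictly decreasing, so the unique local maximum is at $y=0=\mu$.

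For part (iii), write $\log f(y)=\beta^\top y+\log h(\|y\|^2)$; the affine term is harmless, so it suffices that $y\mapsto\log h(\|y\|^2)$ be concave. Put $\phi(s)=\log h(s^2)=-\tfrac{p-1}{2}\log(2\pi)-\|\beta\|s+\log\tilde f(s)$ for $s\ge 0$. By Theorem~\ref{thm1}(iii), log-concavity of $g^*$ makes $\tilde f$ log-concave, so $\phi$ is concave (a concave function minus an affine one); and $\phi$ is nonincreasing on $[0,\infty)$ since $h(\rho)$ is strictly decreasing in $\rho$ (immediate from the integral). A concave nonincreasing function of $s\in[0,\infty)$ composed with the convex map $y\mapsto\|y\|$ is concave, so $\log h(\|y\|^2)$ is concave and $f$ is log-concave; along the way one notes that log-concavity of $g^*$ forces it to be bounded near $0$ and to decay at least exponentially, so $h$ is finite on $[0,\infty)$. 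For part (iv), the direction $b$ becomes $A^{-1}b\neq 0$ (still denoted $b$), and $\log f(bt)=(\beta^\top b)\,t+\log h\big(t^2\|b\|^2\big)=\big(\beta^\top b-\|\beta\|\,\|b\|\big)t+\log\tilde f\big(t\|b\|\big)+\mathrm{const}$; the first term is affine in $t$, and by Theorem~\ref{thm1}(iv) log-convexity of $g^*$ on $(0,\infty)$ makes $\tilde f$ log-convex on $(0,\infty)$, so $t\mapsto\log\tilde f(t\|b\|)$ is convex on $(0,\infty)$. Hence $f(\mu+bt)$ is log-convex on $t\in(0,\infty)$.

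The routine verifications are the differentiations under the integral sign (dominated convergence) and the normalization technicality for $g^*$ noted above. The one genuinely delicate point is the passage from one dimension to $p$ dimensions in part (iii): it is \emph{not} true in general that $\tilde f$ log-concave implies $y\mapsto\tilde f(\|y\|)$ log-concave, and the argument must exploit the extra structural fact that the radial profile $h$ of $f$ is monotone, so that composition with the convex norm preserves concavity.
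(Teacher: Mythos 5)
Your proposal is correct. For parts (i), (ii) and (iv) it is essentially the paper's own argument: reduce to $A=I$, $\mu=0$, observe that $f$ decreases in the component of $y$ orthogonal to $\beta$ so every local maximum lies on the $\beta$-axis, and apply Theorem~\ref{thm1} to the axis restriction, which is a univariate normal variance-mean mixture with mixing density proportional to $g^*$ (your handling of a possibly non-integrable $g^*$ matches the paper's parenthetical remark, and your part (iv) identity is exactly the paper's equation (\ref{fabt}) specialized to $a=0$). Part (iii) is where you genuinely diverge. The paper restricts $f$ to an arbitrary line $a+bt$ and shows via (\ref{fabt})--(\ref{lcc}) that this restriction, times an exponential factor, is again a univariate mixture whose modified mixing density $\tilde g(x)=g^*(x)\exp\left[\frac{(a^\top b)^2-\|a\|^2}{2x}\right]$ remains log-concave because $(a^\top b)^2\le\|a\|^2$; log-concavity along every line then gives joint log-concavity. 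You instead use the decomposition $f(y)=e^{\beta^\top y}h(\|y\|^2)$, prove the radial profile $\phi(s)=\log h(s^2)=\mathrm{const}-\|\beta\|s+\log\tilde f(s)$ is concave (one application of Theorem~\ref{thm1}(iii), along the axis only) and nonincreasing (since $h$ is decreasing), and invoke the rule that a concave nonincreasing function composed with the convex map $y\mapsto\|y\|$ is concave. Both routes are valid: the paper's single line-restriction identity treats (iii) and (iv) uniformly at the cost of checking log-concavity of a modified mixing density for each $(a,b)$, while yours needs only one univariate application plus the composition rule, and your closing remark correctly pinpoints the monotonicity of $h$ as the extra structural fact that makes the composition step legitimate.
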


An important case is the multivariate generalized hyperbolic (MGH) distribution, which arises when $X$ in (\ref{mmix}) has a GIG distribution.  The MGH distribution is further closed under conditioning, that is, if $Y=(Y_1, Y_2)^\top$ has an MGH distribution then so does $Y_1$ conditional on $Y_2$.  This allows us to strengthen the unimodality part of Theorem~\ref{thm2}.  We say a function $f(y)$ on $\mathbf{R}^p$ has convex contours, if the set $\{y\in\mathbf{R}^p:\ f(y)\geq c\}$ is convex for all $c\in \mathbf{R}$.  Obviously, all local maxima of a density having convex contours must be global maxima. 

\begin{corollary}
\label{coro2}
Suppose the density of $X$ in (\ref{mmix}) is given by (\ref{gig}), and let $f(y)$ denote the density of $Y$. 
\begin{itemize}
\item[i]
The density $f(y)$ has convex contours.
\item[ii]
The mode of $f(y)$ is at $\mu$ iff $\beta=0$ or $\chi=0,\, 0<\lambda\leq (p+1)/2$. 
\item[iii]
The density $f(y)$ is log-concave iff $\lambda\geq (p+1)/2$. 
\item[iv]
The function $f(\mu+bt)$ is log-convex on $t\in (0,\infty)$ for all nonzero $b\in\mathbf{R}^p$ iff $\chi=0$ and $0<\lambda\leq (p+1)/2$. 
\end{itemize}
\end{corollary}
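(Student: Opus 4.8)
The plan is to read off the ``if'' directions of parts (ii)--(iv) from Theorem~\ref{thm2} applied to $g^*$, to obtain part (i) from the closure of the MGH family under conditioning together with Corollary~\ref{coro1}(i), and to obtain the ``only if'' directions of (ii)--(iv) from Corollary~\ref{coro1} applied to suitable line restrictions of $f$. Set $\lambda^*=\lambda-(p-1)/2$. Since the GIG density (\ref{gig}) is proportional to $x^{\lambda-1}\exp[-\tfrac12(\chi x^{-1}+\psi x)]$, we have
$$g^*(x)=x^{-(p-1)/2}g(x)\ \propto\ x^{\lambda^*-1}\exp\!\left[-\tfrac12(\chi x^{-1}+\psi x)\right],$$
the kernel of a GIG density with index $\lambda^*$; note $\chi=0$ forces $\lambda>0$, while $\lambda^*$ may still be nonpositive, so $g^*$ need not be integrable. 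Exactly as in the proof of Corollary~\ref{coro1} but with $\lambda$ replaced by $\lambda^*$, the identity $(\log g^*)''=-x^{-3}\bigl[(\lambda^*-1)x+\chi\bigr]$ shows that $g^*$ is log-concave on $(0,\infty)$ iff $\lambda^*\ge 1$ and log-convex on $(0,\infty)$ iff $\chi=0$ and $\lambda^*\le 1$, while inspection of $(\log g^*)'$ shows $g^*$ decreases on $(0,\infty)$ iff $\chi=0$ and $\lambda^*\le 1$. As $\lambda^*\ge 1\iff\lambda\ge(p+1)/2$ and $\lambda^*\le 1\iff\lambda\le(p+1)/2$, parts (iii), (iv) and (in view of part~(i) below) part~(ii) of Theorem~\ref{thm2} yield at once the ``if'' halves of parts (iii), (iv), (ii) of the corollary.

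For part (i), recall that the MGH family is closed under invertible affine transformations, under marginalization, and---as noted in the text---under conditioning. Given any line $\ell\subset\mathbf{R}^p$, choose an invertible affine map $T$ sending $\ell$ onto the first coordinate axis; then $T(Y)$ is again MGH, and the conditional distribution of its first coordinate given that the other $p-1$ coordinates vanish is a univariate GH distribution. Since the relevant marginal density is everywhere positive, the restriction of $f$ to $\ell$ is a positive constant times this univariate GH density, which is unimodal by Corollary~\ref{coro1}(i). Hence for every $c$ the set $\{y:f(y)\ge c\}$ meets every line in an interval, so it is convex; this is part (i), and in particular every local maximum of $f$ is global.

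The ``only if'' halves of (ii)--(iv) all reduce, through one computation, to Corollary~\ref{coro1}. For $\beta\neq 0$ put $q=\|A^{-1}\beta\|^2>0$; writing out the density of $Y$ in (\ref{mmix}) with $g$ the GIG kernel, using $q(t-x)^2/x=qt^2/x-2qt+qx$, and applying $\int_0^\infty x^{\nu-1}\exp[-\tfrac12(ax^{-1}+bx)]\,dx=2(a/b)^{\nu/2}K_\nu(\sqrt{ab})$ gives
$$f(\mu+\beta t)\ \propto\ e^{qt}\bigl(\chi+qt^2\bigr)^{(\lambda-p/2)/2}K_{\lambda-p/2}\!\left(\sqrt{(\psi+q)\,(\chi+qt^2)}\right),$$
so after the reparametrization $u=\sqrt q\,t$ the restriction of $f$ to the line $\{\mu+\beta t:t\in\mathbf{R}\}$ is, up to a positive constant, the GH density (\ref{dens}) centered at $\mu$ with index $\lambda^*$, skewness parameter $\sqrt q\neq 0$, and $\delta^2=\chi$; when $\beta=0$ the same computation with $\beta$ replaced by any nonzero $b$ gives the corresponding density with skewness $0$. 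Now if $\beta\neq 0$ and either $\chi>0$ or ($\chi=0$ and $\lambda>(p+1)/2$), this restricted GH has $\delta>0$ or index $\lambda^*>1$, so by Corollary~\ref{coro1}(ii) its mode is not at $\mu$, whence neither is the mode of $f$; this is the ``only if'' of (ii). If $\lambda<(p+1)/2$, restricting $f$ to a line through $\mu$ produces a univariate GH of index $\lambda^*<1$, not log-concave by Corollary~\ref{coro1}(iii), so $f$ is not log-concave---and in the residual boundary range (small $\lambda$ with $\chi=0$), where this restriction is not a genuine GH density, $f$ is instead unbounded at $\mu$ and hence still not log-concave; this is the ``only if'' of (iii). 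Finally, if $\chi>0$ or ($\chi=0$ and $\lambda>(p+1)/2$), the restriction of $f$ to the line through $\mu$ in direction $\beta$ (or in any direction, if $\beta=0$) has $\delta>0$ or index $\lambda^*>1$, hence fails to be log-convex on $(\mu,\infty)$ by Corollary~\ref{coro1}(iv), exhibiting a nonzero $b$ for which $f(\mu+bt)$ is not log-convex on $(0,\infty)$; this is the ``only if'' of (iv).

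The computations behind the ``if'' directions and the line-restriction formula are routine, and the chief difficulty is bookkeeping: verifying that the restricted GH really carries index $\lambda^*$ and parameter $\delta^2=\chi$, keeping the thresholds $\lambda^*\ge 1$ and $\lambda\ge(p+1)/2$ aligned, and---above all---handling the degenerate boundary $\chi=0$ with $\lambda$ small, where $g^*$ and the line restrictions of $f$ cease to be integrable and $f$ acquires a singularity at $\mu$. There one cannot simply invoke Corollary~\ref{coro1} and must argue directly, either from the unboundedness of $f$ or from Theorem~\ref{thm2} applied to $g^*$; isolating and dispatching these edge cases is the main obstacle.
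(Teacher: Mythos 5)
Your proposal is correct and follows essentially the same route as the paper: the ``if'' halves come from Theorem~\ref{thm2} applied to $g^*$, which is a GIG kernel with shifted index $\lambda-(p-1)/2$, while part (i) and the ``only if'' halves come from the fact that line restrictions of $f$ are proportional to univariate GH densities (by closure under affine maps and conditioning, or by your explicit Bessel-integral computation), to which Corollary~\ref{coro1} is applied. Your treatment is in fact more explicit than the paper's, notably in handling the boundary case $\chi=0$ with $\lambda\leq(p-1)/2$, where the restriction through $\mu$ is not a proper GH density and the paper's terse argument leaves the gap implicit.
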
 

Theorem~\ref{thm2} and Corollary~\ref{coro2} are proved by reducing the problem to a univariate one and then applying Theorem~\ref{thm1}; see Section~3.  We discuss discrete analogues in Section~2. 

\section{Discrete Analogues}
There exist analogous results for mixtures of discrete distributions (Karlin 1968; Bertin and Theodorescu 1995).  In the univariate case, discrete unimodality and log-concavity are defined in obvious parallel with their continuous counterparts.  We mention the following result for Poisson mixtures. 

\begin{theorem}
\label{thm3}
Let $X$ be a positive random variable with Lebesgue density $g(x)$.  Given $X$, let $Y$ have a Poisson$(X)$ distribution, and denote the marginal probability mass function of $Y$ by $f(y)$.  If $g(x)$ is unimodal on $x\in (0,\infty)$ then so is $f(y)$ on $y=0, 1, \ldots$.  The same holds if ``unimodal'' is replaced by ``decreasing,'' ``log-concave,'' or ``log-convex''.
\end{theorem}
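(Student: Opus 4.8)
Write $p(y;x)=e^{-x}x^{y}/y!$, so that $f(y)=\int_{0}^{\infty}p(y;x)\,g(x)\,dx$; the plan is to run the total‑positivity argument behind Theorem~\ref{thm1} with $p(y;x)$ in place of the normal variance--mean kernel. Three elementary facts drive everything. (a) $p(y;x)$ is strictly totally positive on $\{0,1,\dots\}\times(0,\infty)$: $x^{y}=e^{y\log x}$ is strictly totally positive, and multiplying by $e^{-x}$ (a function of $x$ alone) and $1/y!$ (a function of $y$ alone) preserves this (Karlin 1968). (b) For fixed $y$, $p(y;\cdot)$ is the $\mathrm{Gamma}(y+1,1)$ density, so --- a sum of independent $\mathrm{Gamma}(y+1,1)$ and $\mathrm{Gamma}(s,1)$ variables being $\mathrm{Gamma}(y+s+1,1)$ --- one has the convolution identity $p(y+s;\cdot)=p(y;\cdot)\ast\gamma_{s}$, where $\gamma_{s}$ is the $\mathrm{Gamma}(s,1)$ density (and $\gamma_{0}=\delta_{0}$). (c) One integration by parts gives $\int_{0}^{x}(p(y+1;t)-p(y;t))\,dt=-p(y+1;x)$, and a second then yields
\[
f(y+1)-f(y)=\int_{0}^{\infty}p(y+1;x)\,g'(x)\,dx,
\]
valid for smooth $g$ (the boundary terms vanish by the mild decay of a density); the general case follows by approximation, or by reading $g'(x)\,dx$ as the signed measure $dg$.

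The ``decreasing'' and ``unimodal'' parts come straight from (c). If $g$ is nonincreasing then $g'\le 0$, hence $f(y+1)-f(y)\le 0$ and $f$ is nonincreasing. If $g$ is unimodal, then $g'$ (equivalently $dg$) changes sign at most once, from $+$ to $-$; since $p(y+1;x)$ is totally positive in $(y,x)$, the variation‑diminishing property (Karlin 1968) makes $y\mapsto f(y+1)-f(y)$ change sign at most once and, if it does, in the same direction --- precisely unimodality of $f$.

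For the ``log‑convex'' and ``log‑concave'' parts I would argue by composition, since $f$ is log‑convex (resp.\ log‑concave) as a sequence iff the kernel $(y,s)\mapsto f(y+s)$, for $y,s\in\{0,1,2,\dots\}$, is $\mathrm{TP}_{2}$ (every $2\times 2$ minor $\geq 0$), resp.\ $\mathrm{RR}_{2}$ (every $2\times 2$ minor $\leq 0$). By (b),
\[
f(y+s)=\int_{0}^{\infty}p(y;w)\,G(s,w)\,dw,\qquad G(s,w)=\int_{0}^{\infty}\gamma_{s}(u)\,g(w+u)\,du
\]
(with $G(0,\cdot)=g$), so, by the basic composition (Cauchy--Binet) formula, it suffices that $p(y;w)$ is $\mathrm{TP}_{2}$ in $(y,w)$ --- true, it is strictly totally positive --- and that $G(s,w)$ is $\mathrm{TP}_{2}$ (resp.\ $\mathrm{RR}_{2}$) in $(s,w)$. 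Applying the composition formula once more to the integral for $G$, this reduces to two points: $\gamma_{s}(u)$ is $\mathrm{TP}_{2}$ in $(s,u)$, because $\gamma_{s'}(u)/\gamma_{s}(u)\propto u^{s'-s}$ increases in $u$ for $s'>s$; and $g(w+u)$ is $\mathrm{TP}_{2}$ (resp.\ $\mathrm{RR}_{2}$) in $(u,w)$, which is exactly log‑convexity (resp.\ log‑concavity) of $g$ on $(0,\infty)$. Hence the shape of $g$ passes to $f$.

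The difficulty is bookkeeping, not a hard estimate: justifying the integration‑by‑parts identity for every admissible (possibly non‑differentiable) mixing density; keeping the orientation of the single sign change correct when invoking the variation‑diminishing theorem; and, in the log‑convex/log‑concave argument, orienting the two successive compositions (and handling the point mass $\gamma_{0}=\delta_{0}$ in the first one) so that the $\mathrm{TP}_{2}$ or $\mathrm{RR}_{2}$ character of $g(w+u)$ is transmitted to $f(y+s)$ with the correct sign. Each step is routine; it is the total‑positivity formalism that needs care.
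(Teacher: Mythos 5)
Your proposal is correct in substance, but it follows a genuinely different route from the one the paper intends. The paper proves Theorem~\ref{thm3} "by the same approach as Theorem~\ref{thm1}": since $\int_0^\infty e^{-x}x^y/y!\,\mathrm{d}x=1$ and $\int_0^\infty (e^{-x}x^y/y!)\,c\,e^{-\lambda x}\,\mathrm{d}x$ is a constant times $\rho^{\,y}$ with $\rho=(1+\lambda)^{-1}$, one writes $f(y)-c$ and $f(y)-a\rho^{\,y}$ as integrals of the Poisson kernel against $g(x)-c$ and $g(x)-c'e^{-\lambda x}$, and then Lemmas~\ref{lem1} and \ref{lem2} (property $S$ plus variation diminishing) give all four parts uniformly, with no differentiation of $g$ and no limiting argument. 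You instead (a) prove the decreasing and unimodal parts from the identity $f(y+1)-f(y)=\int p(y+1;x)\,\mathrm{d}g(x)$ obtained by integration by parts, plus variation diminishing applied to the sign changes of $\mathrm{d}g$, and (b) prove log-concavity/log-convexity via the characterization that the kernel $(y,s)\mapsto f(y+s)$ is $\mathrm{RR}_2$/$\mathrm{TP}_2$, the Gamma convolution identity $p(y+s;\cdot)=p(y;\cdot)\ast\gamma_s$, and two applications of the basic composition formula; all of your individual claims check out (the orientation of the $\mathrm{TP}_2\circ\mathrm{RR}_2$ compositions, the $\mathrm{TP}_2$ property of $\gamma_s(u)$, and the equivalence between log-concavity/log-convexity of $g$ and the $\mathrm{RR}_2$/$\mathrm{TP}_2$ property of $g(u+w)$ are all right, and the $s=0$ column with $\gamma_0=\delta_0$ can be verified directly since $g(w_1)g(w_2+u)\le g(w_2)g(w_1+u)$ integrates against $\gamma_s$). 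What the paper's route buys is exactly the avoidance of your two flagged bookkeeping issues: it never needs $\mathrm{d}g$ as a signed measure (a unimodal density need not be smooth and can have unbounded variation near an infinite peak, so your integration-by-parts step genuinely requires either a boundary-term estimate using monotonicity or a mollification-plus-pointwise-limit argument), and it never needs the point mass at $s=0$. What your route buys is a self-contained proof of the log-concavity and log-convexity parts that makes the $\mathrm{PF}_2$-type structure explicit and needs only $2\times 2$ minors rather than the full property-$S$ machinery. Either way the result stands; if you keep your version, do write out the approximation step for non-smooth $g$ in part (a), since that is the only place where the argument is not purely formal.
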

The unimodality part of Theorem~\ref{thm3} is a classical result of Holgate (1970); see also Bertin and Theodorescu (1995).  The decreasing and log-convexity parts are obtained by Steutel and van Harn (2003).  All four parts can be proved using the same approach as in our proof of Theorem~\ref{thm1}. 

The practical implications of Theorem~\ref{thm3} are similar to those of Theorem~\ref{thm1}.  Roughly, when fitting a Poisson mixture model, one needs to choose a multi-modal (respectively, heavy-tailed) mixing distribution in order to capture multi-modality (respectively, heavy tails) in the data.  Note that ``heavy'' means ``heavier than the exponential''.

Sichel (1982) studies a generalized inverse Gaussian Poisson (GIG-P) distribution in an economics context.  This is a Poisson mixture as in Theorem~\ref{thm3} where the mixing density $g(x)$ is a generalized inverse Gaussian (\ref{gig}).  When $\chi=0$ the resulting mixture is a negative binomial.  Corollary~\ref{coro3} states unimodality and log-concavity results analogous to those of Corollary~\ref{coro1}.  We omit the parallel derivations. 

\begin{corollary}
\label{coro3}
In the setting of Theorem~\ref{thm3} let $g(x)$ be the GIG density given by (\ref{gig}). 
\begin{itemize}
\item[i]
The GIG-P probability mass function, $f(y)$, is unimodal.
\item[ii]
The function $f(y)$ is decreasing iff $f(0)\geq f(1)$. 
\item[iii]
The function $f(y)$ is log-concave iff $\lambda\geq 1$. 
\end{itemize}
\end{corollary}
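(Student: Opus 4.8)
The plan is to obtain each part from Theorem~\ref{thm3} by identifying the shape of the GIG mixing density $g$ in (\ref{gig}); the only part that does not reduce immediately to such a shape property is the ``only if'' direction of (iii), for which I would use a tail estimate.

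For part~(i), I would differentiate $\log g$. From (\ref{gig}),
\[
\frac{d}{dx}\log g(x)=\frac{\lambda-1}{x}+\frac{\chi}{2x^{2}}-\frac{\psi}{2}
=\frac{1}{x^{2}}\Bigl(-\tfrac{\psi}{2}x^{2}+(\lambda-1)x+\tfrac{\chi}{2}\Bigr),\qquad x>0 .
\]
The factor in parentheses is a concave quadratic in $x$ (a nonconstant linear function when $\psi=0$, in which case necessarily $\chi>0$), and it takes the value $\chi/2\ge 0$ at $x=0$; hence on $(0,\infty)$ it is either everywhere nonpositive or changes sign exactly once, from $+$ to $-$. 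In either case $g$ is unimodal on $(0,\infty)$, and the unimodality part of Theorem~\ref{thm3} yields that $f$ is unimodal. Part~(ii) then follows at once, since a unimodal mass function on $\{0,1,2,\dots\}$ is nonincreasing exactly when its mode sits at $0$, i.e.\ exactly when $f(0)\ge f(1)$.

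For the ``if'' direction of (iii), one more differentiation gives
\[
\frac{d^{2}}{dx^{2}}\log g(x)=-\frac{(\lambda-1)x+\chi}{x^{3}},\qquad x>0 ,
\]
which is $\le 0$ on all of $(0,\infty)$ whenever $\lambda\ge 1$ (recall $\chi\ge 0$); so $g$ is log-concave and the log-concavity part of Theorem~\ref{thm3} makes $f$ log-concave. For the ``only if'' direction assume $\lambda<1$; I would show $f$ is not log-concave from its tail. Evaluating the Poisson integral with (\ref{gig}) by the standard identity $\int_{0}^{\infty}x^{\nu-1}e^{-ax-b/x}\,dx=2(b/a)^{\nu/2}K_{\nu}(2\sqrt{ab})$ gives, when $\chi>0$,
\[
f(y)\ \propto\ \frac{1}{y!}\int_{0}^{\infty}x^{\,y+\lambda-1}\exp\!\Bigl[-\bigl(1+\tfrac{\psi}{2}\bigr)x-\tfrac{\chi}{2x}\Bigr]dx
\ \propto\ \frac{\rho^{\,y}}{y!}\,K_{\lambda+y}(\omega),\qquad \rho=\sqrt{\chi/(2+\psi)},\ \ \omega=\sqrt{\chi(2+\psi)},
\]
and feeding in the standard asymptotics $K_{\nu}(\omega)=\tfrac12\Gamma(\nu)(2/\omega)^{\nu}(1+O(1/\nu))$ together with $\Gamma(\lambda+y)/\Gamma(y+1)=y^{\lambda-1}(1+O(1/y))$ yields $f(y)\sim c\,y^{\lambda-1}r^{\,y}$ with $r=2\rho/\omega=2/(2+\psi)\in(0,1]$. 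Consequently
\[
\log f(y+2)-2\log f(y+1)+\log f(y)=(\lambda-1)\log\!\Bigl(1-\tfrac{1}{(y+1)^{2}}\Bigr)+o\!\bigl(y^{-2}\bigr)>0
\]
for all large $y$ since $\lambda-1<0$, contradicting log-concavity. (When $\chi=0$ the constraints on (\ref{gig}) force $\lambda>0$ and $g$ is a gamma density of shape $\lambda$, which is log-convex on $(0,\infty)$ for $\lambda<1$; then Theorem~\ref{thm3} already gives that $f$, a negative binomial of shape $\lambda\in(0,1)$ with strictly increasing ratio $f(y+1)/f(y)$, is log-convex and hence not log-concave.)

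The shape computations for $g$ and the appeals to Theorem~\ref{thm3} are routine; the part I expect to require real care is the ``only if'' half of (iii) when $\chi>0$. There $\log g$ is concave near $0$ but convex near $\infty$, so no single shape property of $g$ can be pushed through Theorem~\ref{thm3}, and one must instead exploit the Bessel representation of $f$ and the asymptotics of $K_{\nu}$ — the same mechanism used for the corresponding claim in Corollary~\ref{coro1}(iii) — while keeping the error term sharp enough to certify the sign of the second difference of $\log f$.
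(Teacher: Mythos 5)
The paper itself omits the derivation (it says only that the arguments parallel Corollary~\ref{coro1}), and your route is exactly that intended parallel: shape analysis of the GIG density fed into Theorem~\ref{thm3} for (i), (ii) and the ``if'' half of (iii), plus a tail/Bessel argument for the ``only if'' half, mirroring how Corollary~\ref{coro1}(iii) uses the asymptotics of the GH density. Your Bessel representation $f(y)\propto \rho^y K_{\lambda+y}(\omega)/y!$ and the limit $f(y)\sim c\,y^{\lambda-1}r^y$, $r=2/(2+\psi)$, are correct. The one step that does not hold as written is the error bookkeeping in the second difference: the asymptotics you quote, $K_\nu(\omega)=\tfrac12\Gamma(\nu)(2/\omega)^\nu(1+O(1/\nu))$ and $\Gamma(\lambda+y)/\Gamma(y+1)=y^{\lambda-1}(1+O(1/y))$, only give $\log f(y)=\log c+(\lambda-1)\log y+y\log r+O(1/y)$, and a second difference of an $O(1/y)$ remainder is a priori $O(1/y)$, not $o(y^{-2})$; so it can swamp the $(\lambda-1)\log\bigl(1-(y+1)^{-2}\bigr)$ term you rely on. To close this you either need the next term of the uniform large-$\nu$ expansion of $K_\nu(\omega)$ (so the remainder is a smooth $c_1/y+O(1/y^2)$ whose second difference is $O(1/y^3)$), or, more simply, avoid second differences altogether: if $f$ were log-concave the ratio $f(y+1)/f(y)$ would be nonincreasing, and since it tends to $r$ it would satisfy $f(y+1)/f(y)\ge r$ for all $y$, hence $f(y)r^{-y}\ge f(0)>0$, contradicting $f(y)r^{-y}\sim c\,y^{\lambda-1}\to 0$ when $\lambda<1$. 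This patch uses only the first-order asymptotics you already derived.

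A smaller point concerns (ii): weak discrete unimodality plus $f(0)\ge f(1)$ does not by itself force $f$ to be nonincreasing, because of the plateau scenario $f(0)=f(1)<f(2)$. Here this is excluded by the machinery behind Theorem~\ref{thm3}: since $\int_0^\infty e^{-x}x^y/y!\,{\rm d}x=1$, one has $f(y)-c=\int k(x,y)\,[g(x)-c]\,{\rm d}x$ with the strictly totally positive Poisson kernel, and the strict variation-diminishing property (as invoked in the proof of Theorem~\ref{thm1} for uniqueness of the mode) rules out $f(0)=f(1)=c$ followed by $f(2)>c$ when $g-c$ has sign pattern $-,+,-$. Adding a sentence to that effect would make (ii) airtight; otherwise your argument is the intended one.
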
 
The analogy is not complete, however.  Characterizing log-convexity for the GIG-P family seems to be an open problem, for which Theorem~\ref{thm3} only yields a trivial sufficient condition. 

\section{Derivation of Main Results}
For convenience, we say that a function $h(x)$ has property $S$ on an interval $I$, if $h(x)$ has at most two sign changes on $x\in I$ and, in the case of two changes, the sign pattern is $-, +, -$.  (The number of sign changes is counted discarding zero terms.)  Lemma~\ref{lem1} characterizes unimodality (respectively, log-concavity) in terms of how $h(x)$ crosses all horizontal lines (respectively, exponential curves).  
\begin{lemma}
\label{lem1}
A function $h(x)$ is unimodal on an interval $I$ iff $h(x)- c$ has property $S$ on $I$ for all $c\in \mathbf{R}$;
a nonnegative $h(x)$ is log-concave on $I$ iff $h(x)-a e^{b x}$ has property $S$ on $I$ for all $a>0,\, b\in \mathbf{R}$.
\end{lemma}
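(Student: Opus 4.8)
The plan is to prove both equivalences by unwinding the definitions and reducing everything to elementary calculus, since the statement is really a bookkeeping exercise about sign changes.

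\textbf{The unimodal part.} First I would prove the forward direction. Suppose $h$ is unimodal on $I$ with peak at $x_*$, and fix $c\in\mathbf{R}$. On $(-\infty,x_*]\cap I$ the function $h(x)-c$ is nondecreasing, hence changes sign at most once there, from $-$ to $+$; on $[x_*,\infty)\cap I$ it is nonincreasing, hence changes sign at most once, from $+$ to $-$. Concatenating the two monotone pieces, the only possible sign patterns for $h-c$ on all of $I$ are a subsequence of $(-,+,-)$, which is exactly property $S$. (A small care point: if $h(x_*)=c$ the middle "$+$" block may be absent or reduce to zeros, but discarding zeros still leaves a pattern that is a subsequence of $-,+,-$.) For the converse, I would argue contrapositively: if $h$ is not unimodal, then it fails to be "increase then decrease," so there exist points $x_1<x_2<x_3$ in $I$ with $h(x_1)>h(x_2)$ and $h(x_2)<h(x_3)$ (a strict interior dip) or the mirror configuration; picking $c$ strictly between $h(x_2)$ and $\min(h(x_1),h(x_3))$ forces the sign sequence of $h-c$ at $x_1,x_2,x_3$ to be $+,-,+$, which violates property $S$. (I'll need to phrase "not unimodal" carefully — the clean formulation is that there exist $x_1<x_2<x_3$ with $h(x_2)<\min(h(x_1),h(x_3))$ — and note this is essentially the negation of the definition given in the introduction.)

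\textbf{The log-concave part.} The trick here is that $h$ is log-concave on $I$ iff for every $a>0$ and $b\in\mathbf{R}$ the function $\tilde h(x)=h(x)e^{-bx}$, relative to the constant $a$, behaves unimodally — more precisely, $h-ae^{bx}$ has property $S$ iff $h(x)e^{-bx}-a$ has property $S$, since multiplying by the strictly positive function $e^{-bx}$ preserves all signs and hence the entire sign-change pattern. So the claim reduces to: $h$ is log-concave iff $h(x)e^{-bx}$ is unimodal for every $b$, and then applying the already-proved first half of the lemma to the function $h(x)e^{-bx}$. The remaining content is the standard fact that $\log h$ is concave iff $\log h(x)-bx$ has no strict interior dip for every slope $b$, i.e. iff $h(x)e^{-bx}$ is unimodal for every $b$ — this is just the characterization of concavity of a function by "lies below each of its chords" rephrased via subtracting linear functions. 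I would spell out the forward direction (concavity of $\log h$ gives, for each $b$, that $\log h - bx$ is concave hence unimodal) and the converse (if $\log h$ is not concave there is a chord it dips below; choosing $b$ equal to that chord's slope makes $\log h - bx$ non-unimodal).

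\textbf{Main obstacle.} There is no deep obstacle; the only delicate point is the precise handling of zeros and of non-strict monotonicity when translating "at most two sign changes, pattern $-,+,-$" — in particular making sure the definition of property $S$ (which discards zero terms) matches up correctly with the possibility that a unimodal $h$ is constant on a sub-interval or that $h(x_*)=c$ exactly. I would dispose of this by noting that discarding zeros from any subsequence of $(-,+,-)$ yields again a subsequence of $(-,+,-)$, so property $S$ is stable under the zero-discarding convention, and conversely any violation of property $S$ already exhibits a $+,-,+$ triple of strictly-signed values, which is what the converse arguments produce. Everything else is routine.
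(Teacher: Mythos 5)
The paper never actually writes out a proof of Lemma~1: it is treated as an elementary fact, with the idea credited to Propositions 3.1 and 3.2 in Chapter~1 of Karlin (1968). Your crossing argument is exactly the intended justification, and in substance it is correct: the forward directions are the two-monotone-pieces observation, and both converses come down to exhibiting a $+,-,+$ configuration by cutting at a level (respectively, at an exponential whose slope is a chord slope of $\log h$) strictly between an interior dip and its two neighbours; your reduction of the second half via multiplication by the positive factor $e^{-bx}$, which preserves the sign pattern, is also the natural route.

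Two points deserve tightening. First, the step you flag yourself --- that ``not unimodal'' is equivalent to the existence of $x_1<x_2<x_3$ with $h(x_2)<\min\left(h(x_1),h(x_3)\right)$ --- is not literally the negation of the paper's definition for arbitrary $h$: what your argument really proves is that property $S$ for all $c$ is equivalent to the absence of such a dip (quasiconcavity of $h$), and this coincides with ``there exists $x_*$ with $h$ increasing on $I\cap(-\infty,x_*]$ and decreasing on $I\cap[x_*,\infty)$'' only under the reading the paper clearly intends: modes at an endpoint of $I$ or at infinity must be allowed (e.g.\ $e^{x}$ on $\mathbf{R}$, or the decreasing GIG densities in Theorem~1(ii), satisfy property $S$ for every $c$ but have no finite interior mode), and for wildly discontinuous $h$ one can even arrange no dip without increase--then--decrease behaviour. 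This slack is inherited from the statement of the lemma itself rather than from your strategy, but a careful write-up should either assume $h$ continuous (as it is in every application in the paper) or state the equivalence in the no-dip form. Second, in the log-concave half you only obtain property $S$ of $h(x)e^{-bx}-a$ for $a>0$; note explicitly that for nonnegative $h$ the cases $a\le 0$ are vacuous (no sign change), so quantifying over $a>0$ suffices when you invoke the first half, and for the forward direction it is cleaner to get the no-dip property of $h(x)e^{-bx}$ directly from the defining inequality $h(x_2)\ge h(x_1)^{\theta}h(x_3)^{1-\theta}\ge\min\left(h(x_1),h(x_3)\right)$ than to route through ``unimodality'' of $h(x)e^{-bx}$, which again runs into the monotone edge case. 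With these adjustments the proof is complete and matches what the paper leaves implicit.
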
 

A kernel $k(x, y)$ on $x\in I$ and $y\in J$ ($I, J$ are intervals) is totally positive (TP) if for each $m\geq 1$ and all $x_1< x_2<\cdots<x_m,\ y_1<y_2<\cdots<y_m,\ x_i\in I,\ y_j\in J$, the determinant of the $m\times m$ matrix with $k(x_i, y_j)$ as the $(i, j)$ entry is nonnegative.  The kernel $k(x, y)$ is strictly totally positive (STP) if such determinants are strictly positive.  We rely on a variation-diminishing property of TP kernels; see Karlin (1968) for the general theory. 

\begin{lemma}
\label{lem2}
Suppose $k(x, y)$ is a TP kernel on $x\in I,\ y\in J$, and $h(x)$ is a function that has $n$ sign changes on $I$.  Then, assuming the integral is absolutely convergent, $q(y)\equiv \int_I k(x, y) h(x)\, {\rm d}x$ has at most $n$ sign changes on $y\in J$.  Moreover, if $q(y)$ has exactly $n$ sign changes, then its sign pattern as $y$ increases on $J$ is the same as that of $h(x)$ as $x$ increases on $I$.  In particular, if $h(x)$ has property $S$ on $I$ then $q(y)$ has property $S$ on $J$. 
\end{lemma}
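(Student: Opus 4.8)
The plan is to prove Lemma~\ref{lem2}, the variation-diminishing property of totally positive kernels. This is a classical theorem (Karlin 1968, Chapter 5), so my job is mainly to assemble the standard argument cleanly.

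\medskip

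First, I would recall the basic counting fact: if $k(x,y)$ is TP and $h(x)$ has $n$ sign changes on $I$, then the claim that $q(y) = \int_I k(x,y) h(x)\,\mathrm{d}x$ has at most $n$ sign changes is proved by contradiction. Suppose $q$ had $n+1$ sign changes, occurring (discarding zeros) at points $y_0 < y_1 < \cdots < y_n$ in the sense that one can pick $y_0 < y_1 < \cdots < y_n$ with $\mathrm{sgn}\, q(y_j) = \epsilon(-1)^j$ for a fixed $\epsilon = \pm 1$. I would form the linear combination $\sum_{j=0}^n c_j q(y_j)$ with cleverly chosen signs $c_j$; by the Basic Composition Formula / generalized Cauchy–Binet, this equals an integral over ordered tuples $x_0 < \cdots < x_n$ of the product of the $(n+1)\times(n+1)$ minor $\det[k(x_i, y_j)]$ (nonnegative by total positivity) against a factor built from $h(x_0),\dots,h(x_n)$. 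The key is that $h$ having only $n$ sign changes forces, for \emph{every} ordered tuple $x_0 < \cdots < x_n$, the product $h(x_0)\cdots h(x_n)$ (with appropriate alternating signs) to have a consistent sign — so the integrand has one sign, the integral is nonzero of that sign, yet a genuine alternation of $q$ at $n+1$ points would require it to vanish or change sign. This contradiction gives the ``at most $n$'' bound.

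\medskip

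For the second assertion — that when $q$ attains exactly $n$ sign changes its sign pattern (as $y$ increases) matches that of $h$ (as $x$ increases) — I would again use the composition formula but now track the \emph{orientation}. Choosing $y_0 < \cdots < y_n$ realizing all $n$ changes of $q$, the sign of the determinant $\det[k(x_i,y_j)]$ is $+$ for ordered arguments, so the alternation pattern of $q$ at the $y_j$ is inherited directly from the alternation pattern that $h$ exhibits at the dominant ordered tuple $x_0 < \cdots < x_n$; that pattern is precisely the sign sequence of $h$ read left to right across its sign changes. The specialization to property $S$ is then immediate: property $S$ means $h$ has at most two sign changes and, if two, the pattern is $-,+,-$; by what we have shown $q$ has at most two sign changes, and if it has exactly two the pattern is likewise $-,+,-$, which is property $S$ for $q$.

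\medskip

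The main obstacle is purely expository rather than mathematical: stating the Basic Composition Formula at the right level of generality (integral kernels rather than finite matrices) and justifying the interchange of summation/integration under the stated absolute-convergence hypothesis. I would handle the convergence issue by invoking the absolute convergence assumption to apply Fubini, and I would cite Karlin (1968) for the composition identity rather than reprove it. A minor care point is the bookkeeping of ``discarding zero terms'' in the definition of sign change, which I would address by noting that on any finite configuration of points the counting argument only uses the nonzero values, and zeros can be perturbed away or simply excluded from the chosen $y_j$.
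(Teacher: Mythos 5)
The paper itself does not prove Lemma~\ref{lem2}: it is quoted as the classical variation-diminishing theorem for totally positive kernels, with Karlin (1968) cited for the general theory. So your instinct to assemble the standard argument (and to cite Karlin for the Basic Composition Formula) is aimed at the right target, but the sketch as written has a genuine gap in its central mechanism. First, a bookkeeping error: to exhibit $n+1$ sign changes of $q$ you need $n+2$ test points $y_0<\cdots<y_{n+1}$ with alternating nonzero values, not $n+1$ points. More seriously, the composition formula does not convert a single linear combination $\sum_j c_j q(y_j)$ into an integral of $(n+1)\times(n+1)$ minors $\det[k(x_i,y_j)]$; that formula applies to determinants of the form $\det\bigl[\int_I k(x,y_j)h_i(x)\,{\rm d}x\bigr]$, so the quantity you propose to analyze is not of the shape to which Cauchy--Binet applies. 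And the ``key'' claim you lean on --- that for \emph{every} ordered tuple $x_0<\cdots<x_n$ the alternating product built from $h(x_0),\dots,h(x_n)$ has a consistent sign --- is false: with $n=1$ and $h$ negative then positive, two points taken in the negative region give the opposite sign from one point in each region. Sign-change information about $h$ constrains tuples with one point per interval of constant sign, not arbitrary ordered tuples, and this is exactly where your integrand-has-one-sign step breaks down.

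The classical repair is to split $I$ at the $n$ sign-change locations of $h$ into ordered intervals $I_0<I_1<\cdots<I_n$ on which $\epsilon(-1)^ih\geq 0$, set $u_i(y)=\int_{I_i}k(x,y)\,\lvert h(x)\rvert\,{\rm d}x$, so that $q=\epsilon\sum_{i=0}^n(-1)^iu_i$. If $q$ alternated at $y_0<\cdots<y_{n+1}$, the $(n+2)\times(n+2)$ determinant whose columns are $u_0,\dots,u_n,q$ evaluated at these points vanishes (the last column is a combination of the others), while expanding along the $q$-column writes it as $\sum_j(-1)^jq(y_j)\det[u_i(y_l)]_{l\neq j}$, a sum of terms of one sign because each minor $\det[u_i(y_l)]$ equals, by the composition formula, an integral of $\det[k(x_i,y_l)]\prod_i\lvert h(x_i)\rvert$ over tuples with $x_i\in I_i$ (hence automatically ordered) and is therefore nonnegative; one then needs a strictness or approximation argument (e.g.\ reduce to STP kernels, as Karlin does) to rule out all terms vanishing. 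The same determinant bookkeeping, carried out with attention to the orientation $\epsilon$, is what yields the matching sign pattern when $q$ has exactly $n$ changes --- your appeal to a ``dominant ordered tuple'' is not a proof. Since the paper treats Lemma~\ref{lem2} as a known result, simply invoking Karlin (1968, Chapter 5, Theorem 3.1) would suffice; if you do want a self-contained proof, the interval-splitting construction above is the missing ingredient.
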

The idea of using Lemmas~\ref{lem1} and \ref{lem2} to establish unimodality, log-concavity or log-convexity comes from Propositions 3.1 and 3.2 in Chapter 1 of Karlin (1968).  Karlin's results concern monotonicity and concavity, but the method works for unimodality, log-concavity and log-convexity after some modifications. 

\begin{proof}[Proof of Theorem~\ref{thm1}]
If $\beta=0$ then $Y$ in (\ref{mix}) is unimodal with a unique mode at $y=\mu$ for arbitrary $g$.  Hence parts (i)-(ii) hold.  Parts (iii)--(iv) will follow from the $\beta\neq 0$ case by taking limits.  Let us consider $\beta \neq 0$.  After a linear transformation we may assume $\mu=0,\, \beta>0$ and $\sigma=1$.  The density of $Y$ becomes 
\begin{align*}
f(y) &=\int_0^\infty k(x, y) g(x)\, {\rm d}x,\quad -\infty<y<\infty,\\
k(x, y) &\equiv \frac{1}{\sqrt{2\pi x}} \exp\left[\frac{-(y-\beta x)^2}{2x}\right].
\end{align*}
Two properties of the kernel $k(x, y)$ are easily established: 
\begin{itemize}
\item[(a)]
$k(x, y)$ is STP in $x>0,\, y\geq 0$;
\item[(b)]
we have the identity 
\begin{equation}
\label{eqn0}
\int_0^\infty k(x, y)\, {\rm d}x = \frac{1}{\beta},\quad y\geq 0.
\end{equation}
\end{itemize}
Part (a) is obvious after writing $k(x, y)=u(x) v(y) \exp\left[- y^2/(2x)\right]$ (see Karlin 1968, p.\ 18).  Part (b) is a calculus exercise.  Alternatively, (b) holds because the GIG density (\ref{gig}) with $\lambda =1/2$ integrates to one. 

If $g(x)$ is unimodal, then by Lemma~\ref{lem1}, $g(x)- c\beta$ has property $S$ on $x\in (0, \infty)$ for arbitrary $c$.  By (\ref{eqn0}) we have  
\begin{equation}
\label{eqn1}
f(y)- c = \int_0^\infty k(x,y) \left[g(x) - c\beta\right]\, {\rm d}x,\quad y\geq 0.
\end{equation}
By Lemma~\ref{lem2}, $f(y)-c$ has property $S$ on $y\in [0, \infty)$.  We conclude that $f(y)$ is unimodal on $y\in [0, \infty)$ since $c$ is arbitrary.  For $y<0$ we may differentiate inside the integral and directly obtain 
\begin{equation}
\label{deri}
f'(y)=\int_0^\infty \left(\beta-\frac{y}{x}\right) k(x, y) g(x)\, {\rm d}x >0.
\end{equation}
Thus $f(y)$ is unimodal with a nonnegative mode overall.  By invoking strict total positivity, we can show that the mode is unique (see Karlin 1968, Chapter 5, Theorem~3.1).  This proves part (i). 

Similarly, if $g(x)$ is decreasing, then Lemma~\ref{lem2} and (\ref{eqn1}) show that $f(y)-c$ changes signs at most once, from $+$ to $-$, on $y\in [0,\infty)$.  As $c\in\mathbf{R}$ is arbitrary, this means $f(y)$ decreases on $[0, \infty)$, which implies that the mode is at $y=0$.  This proves part (ii). 

For parts (iii)-(iv) let us define 
$$l(y)\equiv \log f(y) - \log c - (\beta-\gamma)y$$ 
for $c>0$ and $\gamma\in \mathbf{R}$.  As long as $\gamma\neq 0$, we have the following generalization of (\ref{eqn1}) 
\begin{align}
\label{eqn2}
f(y)- c \exp\left(\beta y -|\gamma y|\right) &= \int_0^\infty k(x,y) h(x)\, {\rm d}x,\quad y\in \mathbf{R};\\
\nonumber
h(x) &\equiv g(x) - c|\gamma| \exp\left(\frac{\beta^2-\gamma^2}{2} x\right).
\end{align}
Suppose $g(x)$ is log-concave.  By Lemma~\ref{lem1}, $h(x)$ has property $S$ on $x\in (0, \infty)$.  By Lemma~\ref{lem2}, the right hand side of (\ref{eqn2}) has property $S$ on $y\in [0, \infty)$. 

Suppose $\gamma>0$.  Then $l(y)$ has property $S$ on $[0,\infty)$ because it has the same sign pattern as the left hand side of (\ref{eqn2}).  For $y<0$ we apply (\ref{deri}) to obtain 
$$l'(y)= \frac{f'(y)}{f(y)} - \beta +\gamma > \gamma >0.$$
If the sign pattern of $l(y)$ on $[0, \infty)$ is $-, +, -$, then $l(y)\leq 0$ for $y<0$ because of monotonicity.  Hence the overall sign pattern of $l(y)$ on $(-\infty, \infty)$ is still $-, +, -$.  Other cases are similar.  It follows that $l(y)$ has property $S$ on $y\in (-\infty, \infty)$. 

Suppose $\gamma<0$.  We can similarly show that $l(y)$ decreases on $y\in [0, \infty)$ and has property $S$ on $y\in (-\infty, 0]$.  It follows that $l(y)$ has property $S$ for arbitrary $c>0$ and $\gamma\in \mathbf{R}$.  By Lemma~\ref{lem1}, $f(y)$ is log-concave, which proves part (iii). 

Finally, suppose $g(x)$ is log-convex on $(0,\infty)$.  Parallel to part (iii), we can show that if $\gamma>0$, then $-l(y)$ has property $S$ on 
$y\in (0, \infty)$.  If $\gamma<0$ then $-l(y)$ increases on $(0,\infty)$.  Hence $-l(y)$ has property $S$ on $y\in (0,\infty)$ for arbitrary $\gamma\in \mathbf{R}$.  It follows that $f(y)$ is log-convex on $(0,\infty)$.  The $(-\infty, 0)$ case is similar.  Hence part (iv) holds. 
\end{proof} 

{\bf Remark}.  In Theorem~\ref{thm1}, under the stronger assumption that $x^{-1/2} g(x)$ is log-concave, we have an alternative proof of the log-concavity of $f(y)$.  The key is Pr\'{e}kopa's theorem, which states that if a joint Lebesgue density is log-concave then so are the marginals.  The density of $(X, Y)$ is 
$$u(x, y)\propto x^{-1/2} g(x) \exp\left[-\frac{(y-\beta x)^2}{2x}\right],\quad x>0,\ y\in\mathbf{R}.$$
It is easy to verify that $(y-\beta x)^2/(2x)$ is jointly convex in $x>0,\ y\in\mathbf{R}$.  Hence $u(x,y)$ is jointly log-concave.  By Pr\'{e}kopa's theorem, $f(y)=\int_0^\infty u(x, y)\, {\rm d}x$ is log-concave in $y\in \mathbf{R}$.  This argument, however, does not work under the weaker assumption that $g(x)$ is log-concave.  See Yu (2009) for a similar situation. 

To prove Corollary~\ref{coro1}, the following facts concerning modified Bessel functions are used (see Abramowitz and Stegun 1972, Chapter 9).  We have 
\begin{align} 
\label{bess1}
K_\nu(z)&=K_{-\nu}(z);\\
\lim_{z\to 0} z^\nu K_{\nu}(z) &= 2^{\nu-1} \Gamma(\nu),\quad \nu>0;\\
\label{bess2}
K'_\nu(z) &= -\frac{\nu}{z} K_\nu(z) - K_{\nu-1}(z).
\end{align}

\begin{proof}[Proof of Corollary~\ref{coro1}]
Let $g(x)$ denote the GIG density given by (\ref{gig}) with $\chi=\delta^2,\ \psi=\alpha^2-\beta^2$.  Then $g(x)$ is unimodal, and so is $f(y)$ by Theorem~\ref{thm1}.  Hence part (i) holds.  For parts (ii)-(iv) we assume $\mu=0,\ \beta\geq 0$.  If $\delta=0,\ 0<\lambda\leq 1$, then the mixing density $g(x)$ is decreasing, and by Theorem~\ref{thm1}, $f(y)$ has a unique mode at $0$.  Conversely, if $\beta>0$ and $\delta>0$ or $\beta>0,\ \delta=0$ and $\lambda>1$, then direct calculation using (\ref{dens}) and (\ref{bess1})--(\ref{bess2}) yields $f'(0)/f(0) =\beta >0$.  Hence the mode is strictly positive.  This proves part (ii).  If $\lambda\geq 1$, then the GIG density is log-concave, and so is $f(y)$ by Theorem~\ref{thm1}.  If $\lambda <1$ then $f(y)$ cannot be log-concave because as $y\to \pm \infty$, it behaves like $|y|^{\lambda-1} e^{(\beta\mp \alpha)y}$ up to a multiplicative constant (Barndorff-Nielsen and Bl{\ae}sild 1981).  This proves part (iii).  Finally, if $\delta=0$ and $0<\lambda\leq 1$, then $g(x)$ is log-convex.  By Theorem~\ref{thm1}, $f(y)$ is log-convex on each of $(0,\infty)$ and $(-\infty, 0)$.  Conversely, if $f(y)$ is log-convex on each of $(0,\infty)$ and $(-\infty, 0)$, then it increases on $(-\infty, 0)$ and decreases on $(0,\infty)$, because $f(y)\to 0$ as $|y|\to \infty$.  By part (ii), we must have either $\beta=0$ or $\delta=0<\lambda\leq 1$.  If $\beta=0,\ \delta>0$ or $\beta=\delta=0,\ \lambda>1$, then by (\ref{dens}) and (\ref{bess1})--(\ref{bess2}), we have $f'(0)=0$.  By log-convexity $f'(y)\geq 0$ for $y>0$, which is again a contradiction.  Therefore we must have $\delta=0$ and $0<\lambda\leq 1$.  This proves part (iv). 
\end{proof}

The rest of this section proves Proposition~\ref{spindle}, Theorem~\ref{thm2} and Corollary~\ref{coro2}.  Let us henceforth assume $\mu$ is all zero and $A$ is the identity matrix in (\ref{mmix}); the general results follow after a linear transformation.  The density of $Y$ is 
\begin{equation}
\label{mdens}
f(y)=\int_0^\infty (2\pi x)^{-p/2} g(x) \exp\left(-\frac{\|y -\beta x\|^2}{2x}\right)\, {\rm d}x, 
\end{equation}
where $g(x)$ again denotes the mixing density.

\begin{proof}[Proof of Proposition~\ref{spindle}]
For $y\in \mathcal{H}_t$ we can write $y=\beta t + z$ such that $z^\top \beta =0$.  Then (\ref{mdens}) leads to
\begin{equation}
\label{hyper}
f(y) = \int_0^\infty (2\pi x)^{-p/2} g(x) \exp\left[-\frac{\|\beta\|^2 (t-x)^2 + \|z\|^2}{2x}\right]\, {\rm d}x.
\end{equation}
Hence $f(y)$ depends only on $\|z\|$ and in fact decreases in $\|z\|$, yielding spherical contours on each $\mathcal{H}_t$ (a linear transformation makes them ellipsoidal). 
\end{proof}

\begin{proof}[Proof of Theorem~\ref{thm2}]
If $\beta$ is all zero, then $f(y)$ is symmetric with a single mode at zero for arbitrary $g$.  Let us assume $\beta\neq 0$ and consider $f(y)$ on the hyperplane $\mathcal{H}_t$ as in (\ref{hyper}).  Evidently, if $y_*\in \mathcal{H}_t$ is a local mode, and $y_*=\beta t+z_*,$ then $\|z_*\|=0$ by (\ref{hyper}).  Thus all local modes of $f(y)$ lie on the line $y=\beta t,\ t\in\mathbf{R}$.  We have  
$$f(\beta t)= \int_0^\infty (2\pi x)^{-p/2} g(x) \exp\left[-\frac{\|\beta\|^2 (t- x)^2}{2x}\right] \, {\rm d}x.$$
If $g^*(x)= x^{-(p-1)/2} g(x)$ is unimodal, then by Theorem~\ref{thm1}, $f(\beta t)$ is unimodal in $t\in \mathbf{R}$ with a unique mode.  (Although Theorem~\ref{thm1} is stated in terms of a proper mixing density, we note here that whether $\int_0^\infty g^*(x)\, {\rm d}x<\infty$ is not essential; we only need $f(\beta t)<\infty$ for $t\neq 0$, which is easily verified.)  That is, $f(y)$ has a single local maximum on the line $y=\beta t,\ t\in\mathbf{R}$.  This proves part (i).  Moreover, if $g^*(x)$ is decreasing, then by Theorem~\ref{thm1}, the mode of $f(\beta t)$ is at $t=0$.  Thus part (ii) holds. 

For part (iii), let us consider $f(a+bt)$ as a function of $t\in \mathbf{R}$ for $a, b\in\mathbf{R}^p$ such that $\|b\|=1$.  After some algebra, (\ref{mdens}) yields 
\begin{equation}
\label{fabt}
\exp\left[ \left(\|\beta\|-b^\top \beta\right)t\right] f(a+bt) = C\int_0^\infty x^{-1/2} \tilde{g}(x) \exp\left[-\frac{(t + a^\top b -\|\beta\| x)^2}{2x} \right]\, {\rm d}x, 
\end{equation}
where $C$ is a constant not depending on $t$, and
\begin{equation}
\label{lcc}
\tilde{g}(x)=g^*(x) \exp\left[\frac{(a^\top b)^2-\|a\|^2}{2x}\right].
\end{equation}
If $g^*(x)$ is log-concave on $(0,\infty)$, then so is $\tilde{g}(x)$ because $|a^\top b|\leq \|a\|\|b\|=\|a\|$.  By Theorem~\ref{thm1}, the right hand side of (\ref{fabt}) is log-concave in $t\in \mathbf{R}$, and so is $f(a+bt)$.  The claim holds since $a,\, b$ are arbitrary. 

Finally, suppose $g^*(x)$ is log-convex.  Let us consider $a=0$ in (\ref{fabt}) and (\ref{lcc}).  Then $\tilde{g}(x)$ is log-convex.  By Theorem~\ref{thm1}, the right hand side of (\ref{fabt}) is log-convex in $t\in (0,\infty)$, and so is $f(bt)$.  This proves part (iv). 
\end{proof}

\begin{proof}[Proof of Corollary~\ref{coro2}]
Let $a, b\in \mathbf{R}^p$ such that $b\neq 0$.  Then $f(a+bt)$ is proportional to a univariate GH density due to closure properties of the MGH density over linear transformations and conditioning.  By Corollary~\ref{coro1}, $f(a+bt)$ is unimodal in $t\in \mathbf{R}$, which implies convex contours as $a, b$ are arbitrary.  This proves part (i).  Proofs of the other parts parallel those of Corollary~\ref{coro1}.  Consider part (iii) for example.  If $\lambda\geq (p+1)/2$, then $x^{-(p-1)/2}g(x)$ is log-concave, and by Theorem~\ref{thm2}, so is $f(y)$.  Conversely, if $f(y)$ is log-concave, then by considering $f(a+bt)$ with fixed $a,\, b$, Corollary~\ref{coro1} yields $\lambda\geq (p+1)/2$.  Hence the log-concavity claim holds. 
\end{proof} 


\end{document}